\newtheorem{thm}{Theorem}
\theoremstyle{definition}
\theoremstyle{remark}
\def\BE#1{\begin{equation}\label{#1}}
\def\EE{\end{equation}}
\def\eref#1{(\ref{#1})}
\def\BEnum#1{\begin{enumerate}[label=#1,leftmargin=*,topsep=-10pt,itemsep=-3pt]}
\def\EEnum{\end{enumerate}}
\def\ov#1{\overline{#1}}
\def\sf#1{\textsf{#1}}
\def\wt#1{\widetilde{#1}}
\def\tn#1{\textnormal{#1}} 
\def\lr#1{\langle{#1}\rangle}
\def\blr#1{\big\langle{#1}\big\rangle}
\def\sm#1{\begin{small}#1\end{small}}
\def\lra{\longrightarrow}
\def\C{\mathbb C}
\def\fM{\mathfrak M}
\def\cM{\mathcal M}
\def\cN{\mathcal N}
\def\cO{\mathcal O}
\def\P{\mathbb P}
\def\R{\mathbb R}
\def\Q{\mathbb Q}
\def\cS{\mathcal S}
\def\Z{\mathbb Z}
\def\fd{\mathfrak d}
\def\ff{\mathfrak f}
\def\fs{\mathfrak s}
\def\bt{\mathbf t}
\def\bt{\mathbf t}
\def\be{\beta}
\def\ep{\epsilon}
\def\ga{\gamma}
\def\om{\omega}
\def\si{\sigma}
\def\De{\Delta}
\def\Ga{\Gamma}
\def\La{\Lambda}
\def\Om{\Omega}
\def\Si{\Sigma}
\def\Ups{\Upsilon}
\def\fo{\mathfrak o}
\def\nd{\tn{d}}
\def\ev{\tn{ev}}
\def\FS{\tn{FS}}
\def\id{\tn{id}}
\def\PD{\tn{PD}}
\def\odd{\tn{odd}}
\def\eset{\emptyset}
\def\i{\infty}
\def\prt{\partial}
\def\st{\bigstar}
\begin{document}

\thispagestyle{empty}

\title{Real Topological Recursions and WDVV Relations}
\author{Aleksey Zinger\thanks{Partially supported by NSF grants DMS 1500875 and 1901979}}
\maketitle

\begin{abstract}
\noindent
Following a question of K.~Hori at K.~Fukaya's 60th  birthday conference, 
we relate the recently established WDVV-type relations 
for real Gromov-Witten invariants to topological recursion 
relations in a real setting.
We also describe precisely the connections between the relations themselves
previously observed by A.~Alcolado.
\end{abstract}

\setcounter{section}{-1}

\section{Introduction}
\label{intro_sect}

\sf{Gromov-Witten invariants} of a compact symplectic manifold $(X,\om)$
are counts of $J$-holomorphic curves in~$X$ for an $\om$-tame almost complex
structure~$J$. 
They are governed by Kontsevich-Manin's axioms~\cite{KM}.
By these axioms, relations between homology classes on 
the Deligne-Mumford moduli space~$\ov\cM_{g,l}$ of stable nodal complex genus~$g$ 
curves with~$l$ marked points lift directly to relations between Gromov-Witten invariants.
As an example, the equality of the points in the 0-th homology of
$\ov\cM_{0,4}\!\approx\!\P^1$ represented 
by the curves on the top line of Figure~\ref{TopolRel_fig} implies
that the counts of the nodal $J$-holomorphic curves in~$X$ represented 
by the two diagrams on this line are also the~same.
Along with the \sf{Splitting Axiom}~2.2.6 in~\cite{KM}, 
this equality leads to quadratic relations between counts of 
genus~0 $J$-holomorphic curves in different \sf{degrees} (homology classes).
These relations can be expressed in terms of partial differential equations~\eref{CWDVV_e},
known as \sf{WDVV equations},
on the generating function~$\Phi_{\om}$ of~\eref{CPhidfn_e} for the genus~0 Gromov-Witten invariants
of~$(X,\om)$.
The WDVV equations, first obtained in~\cite{KM} and mathematically confirmed 
for semi-positive symplectic manifolds in~\cite{RT}, completely determine
counts of rational curves of a specified degree and passing through specified constraints
in many smooth algebraic varieties from elementary input;
see Sections~5.2 in~\cite{KM}, 10 in~\cite{RT}, and~3 in~\cite{GoPa}.
The trivial relation in $H_0(\ov\cM_{0,4})$ represented by the top line 
in Figure~\ref{TopolRel_fig} is a basic example of a \sf{tautological recursion relation}
of Theorem~\ref{Cpsi_thm}.\\

A \sf{real symplectic manifold} is a triple $(X,\om,\phi)$ consisting of 
a symplectic manifold~$(X,\om)$ and an anti-symplectic involution~$\phi$ on~$(X,\om)$,
i.e.~a diffeomorphism~$\phi$ of~$X$ with itself so that $\phi^2\!=\!\id_X$ and 
$\phi^*\om\!=\!-\om$.
The fixed locus~$X^{\phi}$ of~$\phi$ is then a Lagrangian submanifold of~$(X,\om)$.
The prototypical example is the complex projective space~$\P^n$ with the Fubini-Study symplectic
form $\om\!=\!\om_{\FS}$ and the standard complex conjugation
$$\phi\!=\!\tau_n\!:\P^n\lra\P^n, \qquad 
\tau_n\big([Z_0,\ldots,Z_n]\big)=\big[\ov{Z_0},\ldots,\ov{Z_n}\big].$$
A $J$-holomorphic curve $C$ in $(X,\om,\phi)$ is called \sf{real} if $\phi(C)\!=\!C$.
Counts of real rational $J$-holomorphic curves through generic $k$-tuples of points 
in a topological component~$Y$ of~$X^{\phi}$
and $l$~pairs of $\phi$-conjugate points in $X\!-\!X^{\phi}$ dependent only on 
the degree of the curves, $k$, and~$l$ were first defined in~\cite{Wel4} 
for compact real symplectic fourfolds and 
in~\cite{Wel6} for compact real symplectic sixfolds with~$Y$ orientable.
Building on earlier work of Fukaya-Oh-Ohta-Ono~\cite{FOOO}
and C.-C.~Liu~\cite{Melissa} on the structure and orientability of
moduli spaces of stable $J$-holomorphic maps from bordered Riemann surfaces,
Solomon~\cite{Jake} and Georgieva~\cite{Penka2} interpret Welschinger's invariants
from moduli-theoretic perspectives.
The (genus~0) \sf{real Gromov-Witten invariants} of~$(X,\om,\phi)$
are the counts of real $J$-holomorphic curves arising from these perspectives.
The two moduli-theoretic interpretations of Welschinger's invariants have led
to extensions of these counts to counts of real rational $J$-holomorphic
curves with two-dimensional conjugate constraints in sixfolds 
and to counts with even-dimensional conjugate constraints 
only in higher-dimensional real symplectic manifolds satisfying certain topological
conditions.\\

\begin{figure}
\begin{pspicture}(-.5,-.2)(10,8.2)
\psset{unit=.4cm}
\psline[linewidth=.02](10.25,18)(14,20.5)\psline[linewidth=.02](10.25,19)(14,16.5)
\pscircle*(11.75,19){.15}\pscircle*(13.25,20){.15}\pscircle*(11,18.5){.15}
\rput(11.75,19.7){\sm{1}}\rput(13.8,19.7){\sm{2}}
\pscircle*(11.75,18){.15}\pscircle*(13.25,17){.15}
\rput(11.75,17.3){\sm{3}}\rput(13.8,17.3){\sm{4}}
\rput(17,18.5){\begin{Large}$=$\end{Large}}
\psline[linewidth=.02](21.25,18)(25,20.5)\psline[linewidth=.02](21.25,19)(25,16.5)
\pscircle*(22.75,19){.15}\pscircle*(24.25,20){.15}\pscircle*(22,18.5){.15}
\rput(22.75,19.7){\sm{1}}\rput(24.8,19.7){\sm{3}}
\pscircle*(22.75,18){.15}\pscircle*(24.25,17){.15}
\rput(22.75,17.3){\sm{2}}\rput(24.8,17.3){\sm{4}}
\psline[linewidth=.05](3,13)(3,8)
\psline[linewidth=.02](2.5,12.5)(6,12.5)\psline[linewidth=.02](2.5,8.5)(6,8.5)
\pscircle*(4,12.5){.15}\pscircle*(5,12.5){.15}\pscircle*(3,10.5){.15}
\rput(4,13.2){\sm{$1^+$}}\rput(5.4,13.2){\sm{$2^+$}}\rput(2.4,10.2){\sm{$1_{\R}$}}
\pscircle*(4,8.5){.15}\pscircle*(5,8.5){.15}
\rput(4,7.8){\sm{$1^-$}}\rput(5.4,7.8){\sm{$2^-$}}
\psline[linewidth=.05](11,13)(11,8)
\psline[linewidth=.02](10.5,12.5)(14,12.5)\psline[linewidth=.02](10.5,8.5)(14,8.5)
\pscircle*(12,12.5){.15}\pscircle*(13,12.5){.15}\pscircle*(11,10.5){.15}
\rput(12,13.2){\sm{$1^+$}}\rput(13.4,13.2){\sm{$2^-$}}
\rput(10.4,10.2){\sm{$1_{\R}$}}
\pscircle*(12,8.5){.15}\pscircle*(13,8.5){.15}
\rput(12,7.8){\sm{$1^-$}}\rput(13.4,7.8){\sm{$2^+$}}
\rput(8,10.5){\begin{Large}$+$\end{Large}}
\rput(17,10.5){\begin{Large}$=$\end{Large}}
\rput(23,10.5){\begin{Large}$0$\end{Large}}
\psline[linewidth=.05](3,5)(3,0)
\psline[linewidth=.02](2.5,4.5)(6,4.5)\psline[linewidth=.02](2.5,.5)(6,.5)
\pscircle*(4,4.5){.15}\pscircle*(5,4.5){.15}
\pscircle*(3,3.5){.15}\pscircle*(3,1.5){.15}
\rput(4,5.2){\sm{$1^+$}}\rput(5.4,5.2){\sm{$2^+$}}
\rput(2.4,3.8){\sm{$3^+$}}\rput(2.4,1.8){\sm{$3^-$}}
\pscircle*(4,.5){.15}\pscircle*(5,.5){.15}
\rput(4,-.2){\sm{$1^-$}}\rput(5.4,-.2){\sm{$2^-$}}
\psline[linewidth=.05](11,5)(11,0)
\psline[linewidth=.02](10.5,4.5)(14,4.5)\psline[linewidth=.02](10.5,.5)(14,.5)
\pscircle*(12,4.5){.15}\pscircle*(13,4.5){.15}
\pscircle*(11,3.5){.15}\pscircle*(11,1.5){.15}
\rput(12,5.2){\sm{$1^+$}}\rput(13.4,5.2){\sm{$2^-$}}
\rput(10.4,3.8){\sm{$3^+$}}\rput(10.4,1.8){\sm{$3^-$}}
\pscircle*(12,.5){.15}\pscircle*(13,.5){.15}
\rput(12,-.2){\sm{$1^-$}}\rput(13.4,-.2){\sm{$2^+$}}
\rput(8,2.5){\begin{Large}$+$\end{Large}}
\rput(17,2.5){\begin{Large}$=$\end{Large}}
\psline[linewidth=.05](22,5)(22,0)
\psline[linewidth=.02](21.5,4.5)(25,4.5)\psline[linewidth=.02](21.5,.5)(25,.5)
\pscircle*(23,4.5){.15}\pscircle*(24,4.5){.15}
\pscircle*(22,3.5){.15}\pscircle*(22,1.5){.15}
\rput(23,5.2){\sm{$1^+$}}\rput(24.4,5.2){\sm{$3^+$}}
\rput(21.4,3.8){\sm{$2^+$}}\rput(21.4,1.8){\sm{$2^-$}}
\pscircle*(23,.5){.15}\pscircle*(24,.5){.15}
\rput(23,-.2){\sm{$1^-$}}\rput(24.4,-.2){\sm{$3^-$}}
\psline[linewidth=.05](30,5)(30,0)
\psline[linewidth=.02](29.5,4.5)(33,4.5)\psline[linewidth=.02](29.5,.5)(33,.5)
\pscircle*(31,4.5){.15}\pscircle*(32,4.5){.15}
\pscircle*(30,3.5){.15}\pscircle*(30,1.5){.15}
\rput(31,5.2){\sm{$1^+$}}\rput(32.4,5.2){\sm{$3^-$}}
\rput(29.4,3.8){\sm{$2^+$}}\rput(29.4,1.8){\sm{$2^-$}}
\pscircle*(31,.5){.15}\pscircle*(32,.5){.15}
\rput(31,-.2){\sm{$1^-$}}\rput(32.4,-.2){\sm{$3^+$}}
\rput(27,2.5){\begin{Large}$+$\end{Large}}
\end{pspicture}
\caption{Bordisms relations between real codimension~2 co-oriented cycles of nodal curves on 
$\ov\cM_{0,4}$, $\R\ov\cM_{0,1,2}$, and $\R\ov\cM_{0,0,3}$, respectively.}
\label{TopolRel_fig}
\end{figure}
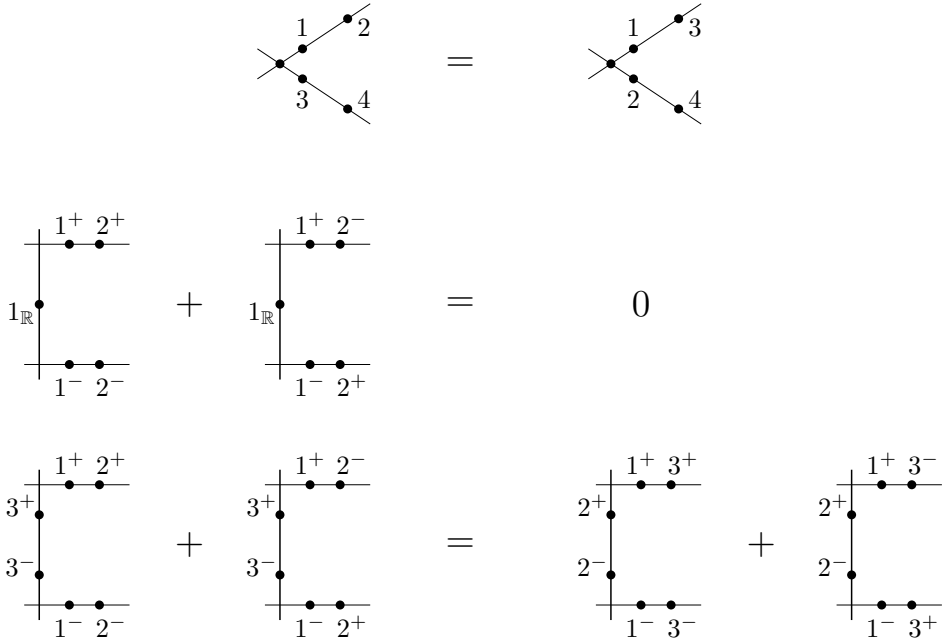

The two moduli-theoretic interpretations of Welschinger's invariants 
have also opened the door to at least potentially obtaining relations
between Welschinger's invariants by lifting relations between homology classes on
the Deligne-Mumford moduli space~$\R\ov\cM_{0,k,l}$ of stable nodal  {\it real} genus~0 
curves with $k$~real marked points and $l$~pairs $(z_i^+,z_i^-)$ of conjugate marked points.
Building on~\cite{Jake},  
Solomon's note~\cite{Jake2} predicts a pair of WDVV-type relations for
Welschinger's invariants of real symplectic fourfolds and 
the existence of similar relations for Welschinger's invariants of real symplectic sixfolds
(whenever they are defined).
Alcolado's thesis~\cite{Adam} predicts an explicit pair of WDVV-type relations for
analogues of Welschinger's invariants in much wider settings,
including those in which such invariants are yet to be defined mathematically.
A WDVV-type relation for the real Gromov-Witten invariants defined in~\cite{Penka2}
is obtained in~\cite{RealEnum} by 
establishing the relation between the one-dimensional homology classes on~$\R\ov\cM_{0,0,3}$ 
represented by the bottom line in Figure~\ref{TopolRel_fig},
lifting it to a relation between counts of real nodal $J$-holomorphic 
curves in~$(X,\om,\phi)$, and 
expressing these counts in terms of counts of real and complex irreducible curves.
The WDVV-type relations for Welschinger's invariants of real symplectic fourfolds predicted
in~\cite{Jake2} are established in~\cite{RealWDVV} by lifting relations 
between cycles in~$\R\ov\cM_{0,1,2}$ and~$\R\ov\cM_{0,0,3}$,
but over non-orientable forgetful morphisms~\eref{Cffdfn_e} from moduli spaces of real~maps.
The same general approach is used in~\cite{RealWDVVdim3} to obtain WDVV-type relations for 
Welschinger's invariants of real symplectic sixfolds with some symmetry.
As shown in~\cite{RealWDVVapp}, these relations completely determine
Welschinger's invariants of some important real symplectic fourfolds and sixfolds
from elementary input and lead to lower bounds in the real enumerative geometry 
of rational curves.  
In the case of~$(\P^3,\om_{\FS},\tau_3)$, these lower bounds fit nicely
with Koll\'ar's vanishing results~\cite{Kollar}; see Section~2.6 in~\cite{RealWDVVdim3}.\\

We recall the standard tautological recursion relation of Theorem~\ref{Cpsi_thm} and
its connection with WDVV relations between the standard (complex) Gromov-Witten invariants 
in Section~\ref{Crel_sec}.
We formulate a pair of real topological recursion relations in Theorem~\ref{Rpsi_thm}
and describe their connection with the WDVV-type relations between 
real Gromov-Witten invariants established in~\cite{RealEnum,RealWDVV,RealWDVVdim3}.
This in particular affirmatively answers a question of K.~Hori on whether  
these real WDVV-type relations arise from some real topological recursion relations.
In contrast to the relations of Theorem~\ref{Cpsi_thm}, the relations of Theorem~\ref{Rpsi_thm} 
for the generally unorientable moduli spaces~$\R\ov\cM_{0,k,l}$
involve {\it co-oriented} cycles, i.e.~cycles with orientations on the normal bundles.
We recall the standard WDVV PDEs and the two sets of real WDDV PDEs in Section~\ref{WDVV_subs}.
In Section~\ref{PDE_sec}, we show that the standard WDVV PDEs and 
one of the sets of real WDDV PDEs implies the other set when there are nonzero
real genus~0 Gromov-Witten invariants with at least~2 real marked point constraints.
The statement of Theorem~\ref{RWDVVimpl_thm} in this section and its proof 
formalize the observation and reasoning originally due to A.~Alcolado 
(as far as the author is aware);
they are included in this note for dissemination purposes.
Theorem~\ref{Rpsi_thm} is established in Section~\ref{Rrelpf_sec},
after a standard proof of Theorem~\ref{Cpsi_thm} is recalled in Section~\ref{Crelpf_sec}.\\

The author  would like to thank Xujia Chen for enlightening discussions on 
real WDVV relations,
the organizers of K.~Fukaya's 60th  birthday conference for their invitation, and
K.~Hori for raising the question answered by Theorem~\ref{Rpsi_thm} in the present note.

\section{The $\C$ case}
\label{Crel_sec}

For $l\!\in\!\Z^{\ge0}$, we define 
$$[l]=\big\{1,2,\ldots,l\big\}.$$
For $l\!\ge\!3$ and $i\!\in\![l]$, let $\psi_i\!\in\!H^2(\ov\cM_{0,l})$ 
be the first Chern class of
the universal cotangent line bundle~$L_i^*$ for the first marked point as usual.
For a partition $I\!\sqcup\!J$ of $[l]$, we denote by $D_{I,J}\!\subset\!\ov\cM_{0,l}$
the closure of the subspace of two-component curves so that one of 
the components carries the marked points indexed by~$I$ and 
the other component carries the marked points indexed by~$J$.
In particular, $D_{I,J}$ is a complex divisor in the compact complex manifold $\ov\cM_{0,l}$;
it is empty if either $|I|\!<\!2$ or $|J|\!<\!2$.

\begin{thm}[$\C$ Topological Recursion Relations]\label{Cpsi_thm}
If $i,j\!\in\![l]\!-\!\{1\}$ are distinct, then
\BE{Cpsi_e}\psi_1 =\sum_{I\sqcup J=[l]-\{1,i,j\}}
\hspace{-.3in}\PD_{\ov\cM_{0,l}}\big(D_{1I,ijJ}\big)
\in H^2\big(\ov\cM_{0,l};\Z\big).\EE
\end{thm}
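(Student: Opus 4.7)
The plan is to establish the identity~\eref{Cpsi_e} for the base case $l\!=\!4$ and then propagate it to arbitrary $l$ via the forgetful morphism to $\ov\cM_{0,4}\!=\!\ov\cM_{0,\{1,i,j,k\}}$ for some $k\!\in\![l]\!-\!\{1,i,j\}$.

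For $l\!=\!4$ one has $\ov\cM_{0,4}\!\cong\!\P^1$, whose three boundary points $D_{\{1,i\},\{j,k\}}$, $D_{\{1,j\},\{i,k\}}$, $D_{\{1,k\},\{i,j\}}$ all represent the same generator of $H^2(\P^1;\Z)$. The right-hand side of~\eref{Cpsi_e} reduces to $[D_{\{1,k\},\{i,j\}}]$, the other putative term $D_{\{1\},\{i,j,k\}}$ being empty. A direct computation of $\deg L_1^*$ on $\P^1$---for example, by regarding $\ov\cM_{0,4}$ as the fiber of $\ov\cM_{0,5}\!\to\!\ov\cM_{0,4}$ and computing the self-intersection of the section corresponding to the first marked point---shows that $\psi_1$ is also a point class, completing the base case.

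For $l\!\ge\!5$, let $f\!:\ov\cM_{0,l}\!\to\!\ov\cM_{0,\{1,i,j,k\}}$ denote the forgetful morphism that drops the remaining marked points and contracts unstable components. Two standard formulas then carry out the propagation: the set-theoretic preimage identity
$$f^*D_{\{1,k\},\{i,j\}} \;=\! \sum_{I'\sqcup J'=[l]-\{1,i,j,k\}}\!\!\!\! D_{\{1,k\}\cup I',\,\{i,j\}\cup J'},$$
and the iterated $\psi$-comparison
$$\psi_1 \;=\; f^*\psi_1 \;+\!\! \sum_{\emptyset\ne S\subset[l]-\{1,i,j,k\}}\!\!\!\! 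D_{\{1\}\cup S,\,\{i,j,k\}\cup S^c},$$
the latter obtained by iterating the single-step identity $\psi_1\!=\!\pi^*\psi_1\!+\!D_{\{1,p\},\ldots}$ for $\pi$ forgetting one marked point~$p$. Substituting the base-case equality $\psi_1|_{\ov\cM_{0,4}}\!=\![D_{\{1,k\},\{i,j\}}]$ into the second display and combining with the first, the two sums partition the family $\{D_{\{1\}\cup I,\{i,j\}\cup J}:I\sqcup J\!=\![l]\!-\!\{1,i,j\}\}$ according to whether $k\!\in\!I$ (matched by the first sum via $I\!=\!\{k\}\cup I'$) or $k\!\in\!J$ with $S\!=\!I$ (matched by the second sum; the omitted $I\!=\!\emptyset$ case yields the empty divisor). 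This produces~\eref{Cpsi_e}.

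The main point of care is the iterated $\psi$-comparison: one must verify that each boundary divisor contracted by~$f$ appears exactly once and that no birationally mapping divisor contributes spuriously. This reduces by induction on the number of forgotten marked points to the single-step identity, whose standard derivation uses the identification of the universal cotangent line at the first marked point on $\ov\cM_{0,l+1}$ with the pullback of its counterpart on~$\ov\cM_{0,l}$ twisted by the exceptional divisor $D_{\{1,p\},\ldots}$ where $p$ denotes the forgotten point.
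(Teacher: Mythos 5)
Your argument is correct and is essentially the paper's proof: both rest on the single-step comparison $\psi_1=\ff^*\psi_1+\PD\big(D_{1(l+1),[l]-\{1\}}\big)$ and the pullback formula $\ff^*\PD\big(D_{I,J}\big)=\PD\big(D_{I(l+1),J}\big)+\PD\big(D_{I,J(l+1)}\big)$ for the morphism forgetting a single marked point, iterated over all the forgotten points. The only cosmetic differences are that the paper starts the induction at $l=3$, where both sides vanish (so no separate computation of $\deg\psi_1$ on $\ov\cM_{0,4}\approx\P^1$ is needed), and applies the two formulas one marked point at a time rather than pulling back in one shot from $\ov\cM_{0,4}$ and matching the resulting sums combinatorially.
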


\vspace{.15in}

Since $\ov\cM_{0,l}$ and $D_{1I,ijJ}$ are canonically oriented (by their complex orientations),
it is customary to drop $\PD_{\ov\cM_{0,l}}$.
For $l\!=\!4$, \eref{Cpsi_e}~gives
\BE{Cpsirel_e} D_{12,34}=\psi_1=D_{13,24}\in H_0\big(\ov\cM_{0,4};\Z\big)\,;\EE
see the top diagram in Figure~\ref{TopolRel_fig}.\\

Let $(X,\om)$ be a compact symplectic manifold, $B\!\in\!H_2(X;\Z)$,
and $J$ be an $\om$-tame almost complex structure on~$X$.
We then denote by $\ov\fM_{0,l}(B;J)$ the moduli space of stable 
degree~$B$ $J$-holomorphic maps from genus~0 connected nodal Riemann surfaces
with $l$~marked points.
For $l'\!\in\![l]\!-\![2]$, there is a natural forgetful morphism
\BE{Cffdfn_e} \ff\!:\ov\fM_{0,l}(B;J)\lra \ov\cM_{0,l'}\EE
dropping the map component and the last $l\!-\!l'$ marked points of 
each stable map.
The genus~0 degree~$B$ Gromov-Witten invariants of~$(X,\om)$ are
obtained by intersecting the~$l$ natural evaluation morphisms
from the moduli space $\ov\fM_{0,l}(B;J)$ with $l$ cycles in~$X$ in general position.\\

For $l\!\ge\!4$, the homology relation~\eref{Cpsirel_e} lifts via 
the $l'\!=\!4$ case of the morphism~\eref{Cffdfn_e} to 
a homology relation between cycles on~$\ov\fM_{0,l}(B;J)$
generically consisting of maps from two-component curves as 
on the top line in Figure~\ref{TopolRel_fig}.
The latter relation in turn translates into a relation between counts 
of two-component genus~0 degree~$B$ $J$-holomorphic curves passing through $l$~cycles. 
Replacing the node condition by the splitting of the diagonal as in~\eref{diagsplit_e}, 
one then obtains a quadratic relation between the genus~0 Gromov-Witten invariants of~$(X,\om)$.
The collection of these quadratic relations taken over all~$B$ and possible choices
of the constraints is equivalent to the PDEs~\eref{CWDVV_e} and
to the associativity of the quantum product.
However, the original proof of the associativity in~\cite{RT} instead involved defining
genus~0 Gromov-Witten invariants for each element of~$\ov\cM_{0,4}$
and showing that they are in fact independent of the chosen element
(this is more rigid than taking a path between~$D_{12,34}$ and~$D_{13,24}$
and lifting it to paths in the moduli spaces of maps).\\

Let $\fo_{D_{1I,ijJ}}^c$ denote the orientation of the normal bundle~$\cN D_{1I,ijJ}$
induced by the complex orientations of~$\ov\cM_{0,l}$ and~$D_{1I,ijJ}$.
The relation~\eref{Cpsirel_e} can then be written~as
$$ \PD_{\ov\cM_{0,4}}\big(D_{12,34},\fo_{D_{12,34}}^c\big)=\psi_1
=\PD_{\ov\cM_{0,4}}\big(D_{13,24},\fo_{D_{13,24}}^c\big)
\in H^2\big(\ov\cM_{0,4};\Z\big).$$
The equality of the left- and right-hand sides above
depends only on choosing the co-orientations $\fo_{D_{12,34}}^c$
and $\fo_{D_{12,34}}^c$ consistently, 
not on the orientations of~$\ov\cM_{0,4}$ and~$D_{1I,ijJ}$.
This form of~\eref{Cpsirel_e}
is more convenient for lifting by maps between non-orientable spaces.

\section{The $\R$ case}
\label{Rrel_sec}

For $k,l\!\in\!\Z^{\ge0}$ with $k\!+\!2l\!\ge\!2$ and $i\!\in\![l]$,
let $\psi_i\!\in\!H^2(\R\ov\cM_{0,k,l})$ be the first Chern class of
the universal cotangent line bundle~$L_i^*$ for the first marked point~$z_i^+$
in the $i$-th conjugate pair~$(z_i^+,z_i^-)$ of marked points.
For a partition $I\!\sqcup\!J$ of~$[l]$, we denote by $\R D_{I,J}\!\subset\!\R\ov\cM_{0,k,l}$
the closure of the subspace of three-component curves,
with one real component (preserved by the involution) and 
a conjugate pair of components (interchanged by the involution),
so that the conjugate components carry the conjugate pairs of 
marked points indexed by~$I$ and 
the real component carries the conjugate pairs of
marked points indexed by~$J$ (along with all real marked points).
In particular, $\R D_{I,J}$ is a compact submanifold of real codimension~2;
it is empty unless $|I|\!\ge\!2$ and either $J\!\neq\!\eset$ or $k>\!0$.
Each non-empty $\R D_{I,J}$ has precisely $2^{|I|-1}$ topological components
indexed by the unordered partitions of the marked points~$z_i^+$ with $i\!\in\!I$
between the two conjugate components of each domain curve.\\

The normal bundle $\cN\R D_{I,J}$ of $\R D_{I,J}$ in $\R\ov\cM_{0,k,l}$ consists
of conjugate pairs of smoothings of the two nodes.
We denote by~$\fo_{\R D_{I,J}}^c$ the co-orientation of~$\R D_{I,J}$ in~$\R\ov\cM_{0,k,l}$
obtained from the complex orientation of the smoothings of the node
separating off the conjugate component carrying the marked point~$z_i^+$ with 
the smallest value of~$i$ in~$I$.

\begin{thm}[$\R$ Topological Recursion Relations]\label{Rpsi_thm}
If $k\!>\!0$, then
$$\psi_1
=\sum_{I\sqcup J=[l]-\{1\}}
\hspace{-.22in}\PD_{\R\ov\cM_{0,k,l}}\big(\R D_{1I,J},\fo_{\R D_{1I,J}}^c\big)
\in H^2\big(\R\ov\cM_{0,k,l};\Z\big).$$
If $i\!\in\![l]\!-\!\{1\}$, then
$$\psi_1
=\sum_{I\sqcup J=[l]-\{1,i\}}
\hspace{-.26in}\PD_{\R\ov\cM_{0,k,l}}\big(\R D_{1I,iJ},\fo_{\R D_{1I,iJ}}^c\big)
\in H^2\big(\R\ov\cM_{0,k,l};\Z\big).$$
\end{thm}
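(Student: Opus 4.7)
The plan is to adapt the argument used for Theorem~\ref{Cpsi_thm} (given in Section~\ref{Crelpf_sec}) to the real setting. In the complex case, the proof rests on a forgetful morphism to a low-dimensional moduli space together with the standard comparison formula
$$\psi_1 \;=\; \pi^*\psi_1 \;+\; \big[D_{\{1,m\}}\big]$$
relating the cotangent line at the first marked point to its pullback. In the real case, the targets of the forgetful maps we choose have real dimension at most~$1$, so $\pi^*\psi_1$ vanishes in $H^2$ and the entire class $\psi_1$ builds up from the accumulated comparison corrections.

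For the first relation, available since $k>0$, I would use the iterated forgetful morphism
$$\pi\!:\R\ov\cM_{0,k,l}\lra\R\ov\cM_{0,1,1}$$
that drops all real marked points except one and all conjugate pairs except $(z_1^+,z_1^-)$. Since $\R\ov\cM_{0,1,1}$ is a point, $\pi^*\psi_1=0$. Next one shows that forgetting a real marked point introduces no correction to $\psi_1$ (the component carrying $z_1^+$ is never destabilized by such an operation), while forgetting the $m$-th conjugate pair contributes the divisor $\R D_{\{1,m\},[l]-\{1,m\}}$ with the co-orientation coming from the complex orientation on the smoothing of the node on the $z_1^+$-side of the bubble. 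Iterating and using the pullback formula
$$\pi_m^*\,\R D_{\{1\}\cup I,J} \;=\; \R D_{\{1\}\cup I,\,J\cup\{m\}} \;+\; \R D_{\{1\}\cup I\cup\{m\},\,J},$$
which reflects the two possible locations of the $m$-th conjugate pair in the source curve, the corrections telescope into $\sum_{I\sqcup J=[l]-\{1\}}\R D_{\{1\}\cup I,J}$, with each summand appearing exactly once.

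The second relation follows by the same scheme using $\pi\!:\R\ov\cM_{0,k,l}\to\R\ov\cM_{0,0,2}$ retaining only the first and $i$-th conjugate pairs. Since $\R\ov\cM_{0,0,2}$ has real dimension~$1$, $\pi^*\psi_1\in H^2$ again vanishes; the analogous iteration, with the pair $(z_i^+,z_i^-)$ always lying on the real-component side of each correction because it is never forgotten, yields $\sum_{I\sqcup J=[l]-\{1,i\}}\R D_{\{1\}\cup I,\{i\}\cup J}$.

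The principal technical obstacle is the bookkeeping of co-orientations: since $\R\ov\cM_{0,k,l}$ is in general non-orientable, the identity must be interpreted purely at the level of co-oriented cycles, and one cannot pass to orientations on the ambient space and the divisors separately. The redeeming feature is that the normal bundle of each $\R D_{I,J}$ splits as a conjugate pair of complex line bundles (one smoothing parameter per node in the conjugate pair of nodes), and the comparison-formula correction, being a twist of the cotangent line at $z_1^+$, is automatically supported on and complex-oriented by the line bundle corresponding to the node on the $z_1^+$-side. That line bundle supplies exactly the co-orientation $\fo^c_{\R D_{1I,J}}$ specified in the statement, so the match-up of co-orientations at each step of the iteration becomes essentially tautological once the real comparison formula is correctly formulated.
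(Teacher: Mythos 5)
Your proposal is correct and follows essentially the same route as the paper: an induction over the forgetful morphisms $\R\ov\cM_{0,k,l+1}\!\to\!\R\ov\cM_{0,k,l}$ and $\R\ov\cM_{0,k+1,l}\!\to\!\R\ov\cM_{0,k,l}$ down to the base cases $\R\ov\cM_{0,1,1}$ (a point) and $\R\ov\cM_{0,0,2}$ (a circle), using the comparison formula for $\psi_1$, the divisor pullback formula, the absence of a correction when a real marked point is forgotten, and the observation that the correction divisor carries exactly the co-orientation $\fo^c_{\R D_{1I,J}}$ coming from the smoothing of the node on the $z_1^+$ side.
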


\vspace{.15in}

For $(k,l)\!=\!(1,2)$ and $(k,l)\!=\!(0,3)$, Theorem~\ref{Rpsi_thm} gives
\begin{gather}
\label{RM12psirel_e}
\PD_{\R\ov\cM_{0,1,2}}\big(\R D_{12,\eset},\fo_{\R D_{12,\eset}}^c\big)=\psi_1=0
\in H^2\big(\R\ov\cM_{0,1,2};\Z\big),\\
\label{RM03psirel_e}
\PD_{\R\ov\cM_{0,0,3}}\big(\R D_{12,3},\fo_{\R D_{12,3}}^c\big)=\psi_1
=\PD_{\R\ov\cM_{0,0,3}}\big(\R D_{13,2},\fo_{\R D_{13,2}}^c\big)
\in H^2\big(\R\ov\cM_{0,0,3};\Z\big).
\end{gather}
The equality of the left- and right-hand sides of~\eref{RM12psirel_e}, 
depicted by the middle row in Figure~\ref{TopolRel_fig}, is immediate 
because each of the two sides of~\eref{RM12psirel_e} represents a point 
in $\R\ov\cM_{0,1,2}\!\approx\!\R\P^2$.
The equality of the left- and right-hand sides of~\eref{RM03psirel_e} in $H_1$
of the orientable space~$\R\ov\cM_{0,0,3}$, depicted by the bottom row in Figure~\ref{TopolRel_fig},
was established in~\cite{RealEnum} based
on a direct geometric study of~$\R\ov\cM_{0,0,3}$.\\

Let $(X,\om,\phi)$ be a compact real symplectic manifold.
Define
$$H_2^{\phi}(X;\Z)\equiv\big\{B'\!\in\!H_2(X;\Z)\!:\phi_*(B')\!=\!-B'\big\},\quad
H^{2*}(X;\Q)^{\phi}_{\pm}\equiv\big\{\ga\!\in\!H^{2*}(X;\Q)\!:\phi^*\ga\!=\!\pm\ga\big\}.$$
Let $B\!\in\!H_2(X;\Z)$
and $J$ be an $\om$-tame almost complex structure on~$X$ such that $\phi^*J\!=\!-J$.
We then denote by $\ov\fM_{0,k,l}^{\phi}(B;J)$ the moduli space of stable 
degree~$B$ $J$-holomorphic maps~$u$ from genus~0 connected nodal symmetric 
Riemann surfaces~$(\Si,\si)$ with $k$~real marked points $x_i\!\in\!\Si^{\si}$
and $l$~pairs of conjugate marked points \hbox{$z_i^{\pm}\!\in\!\Si\!-\!\Si^{\si}$}
such that $\phi\!\circ\!u\!=\!u\!\circ\!\si$.
This space is empty unless \hbox{$B\!\in\!H_2^{\phi}(X;\Z)$}.
For $k'\!\in\![k]$ and $l'\!\in\![l]$ with $k'\!+\!2l'\!\ge\!3$, 
there is a natural forgetful morphism
\BE{Rffdfn_e} \ff\!:\ov\fM_{0,k,l}^{\phi}(B;J)\lra \R\ov\cM_{0,k',l'}\EE
dropping the map component, the last $k\!-\!k'$ real marked points of 
each stable map, and the last $l\!-\!l'$ pairs of conjugate marked points.
The genus~0 real degree~$B$ Gromov-Witten invariants of~$(X,\om,\phi)$,
whenever defined, are obtained by intersecting 
the~$k$ evaluation morphisms from $\ov\fM_{0,k,l}^{\phi}(B;J)$ to~$X^{\phi}$ and 
the~$l$ evaluation morphisms to~$X$ (taken at~$z_i^+$)
with $k$~points in~$X^{\phi}$ and $l$~cycles in~$X$ in general position.
In contrast to the $\C$~case of Section~\ref{Crel_sec}, the total evaluation morphism
\BE{evdfn_e}\ev\!\equiv\!\big(\ev_1^{\R},\ldots,\ev_k^{\R},\ev_1^+,\ldots,\ev_l^+\big)\!:
\ov\fM_{0,k,l}^{\phi}(B;J)\lra (X^{\phi})^k\!\times\!X^l\EE
is generally not even relatively orientable in the present setting.\\

If $k\!=\!0$, $l\!\ge\!3$, and $(X,\phi)$ satisfies certain topological conditions
specified in~\cite{Penka2,Teh}, the homology relation~\eref{RM03psirel_e} 
lifts via the $(k',l')\!=\!(0,3)$ case of the morphism~\eref{Rffdfn_e} to 
a homology relation between cycles on~$\ov\fM_{0,0,l}^{\phi}(B;J)$
generically consisting of maps from three-component curves,
with one real component and a conjugate pair of components as 
on the bottom line in Figure~\ref{TopolRel_fig}.
The latter relation in turn translates into a relation between counts 
of real three-component genus~0 degree~$B$ $J$-holomorphic curves passing 
through $l$~cycles.
Replacing the node conditions by the splitting of the diagonal as in~\eref{diagsplit_e}
yields the bilinear relations between 
the real genus~0 Gromov-Witten invariants of~$(X,\om,\phi)$ with conjugate pairs of 
insertions and 
the
standard genus~0 Gromov-Witten invariants of~$(X,\om)$ obtained in~\cite{RealEnum}.
The collection of these bilinear relations taken over all~$B$ and possible choices
of the constraints is equivalent to a subset of the PDEs~\eref{WDVVodeM03_e}; 
see Section~\ref{WDVV_subs} for more details.\\

\begin{figure}
\begin{pspicture}(-.5,-.2)(10,5.6)
\psset{unit=.4cm}
\psline[linewidth=.05](6,13)(6,8)
\psline[linewidth=.02](5.5,12.5)(9,12.5)\psline[linewidth=.02](5.5,8.5)(9,8.5)
\pscircle*(7,12.5){.15}\pscircle*(8,12.5){.15}\pscircle*(6,10.5){.15}
\rput(7,13.2){\sm{$1^+$}}\rput(8.4,13.2){\sm{$2^+$}}\rput(6.7,10.5){\sm{$1_{\R}$}}
\pscircle*(7,8.5){.15}\pscircle*(8,8.5){.15}
\rput(7,7.8){\sm{$1^-$}}\rput(8.4,7.8){\sm{$2^-$}}
\psline[linewidth=.05](12,13)(12,8)
\psline[linewidth=.02](11.5,12.5)(15,12.5)\psline[linewidth=.02](11.5,8.5)(15,8.5)
\pscircle*(13,12.5){.15}\pscircle*(14,12.5){.15}\pscircle*(12,10.5){.15}
\rput(13,13.2){\sm{$1^+$}}\rput(14.4,13.2){\sm{$2^-$}}
\rput(12.7,10.5){\sm{$1_{\R}$}}
\pscircle*(13,8.5){.15}\pscircle*(14,8.5){.15}
\rput(13,7.8){\sm{$1^-$}}\rput(14.4,7.8){\sm{$2^+$}}
\rput(10.2,10.5){\begin{Large}$+$\end{Large}}
\rput(16.2,10.5){\begin{Large}$=$\end{Large}}
\pscircle[linewidth=.05](20.3,11){1.5}\pscircle[linewidth=.05](23.3,11){1.5}
\rput(17.8,10.5){\begin{large}$-2$\end{large}}
\pscircle*(21.8,11){.15}
\pscircle*(19.24,12.06){.15}\pscircle*(21.36,12.06){.15}\pscircle*(24.8,11){.15}
\pscircle*(19.24,9.94){.15}\pscircle*(21.36,9.94){.15}
\rput(18.9,12.7){\sm{$1^+$}}\rput(21.9,12.7){\sm{$2^+$}}
\rput(18.9,9.4){\sm{$1^-$}}\rput(21.9,9.4){\sm{$2^-$}}
\rput(24.1,11){\sm{$1_{\R}$}}
\rput(21.8,8){$\ep(\cS)\!\cong_4\!\!2,~\cap\!\Ups_{1,2}$}
\pscircle[linewidth=.05](29,11){1.5}\pscircle[linewidth=.05](32,11){1.5}
\rput(26.5,10.5){\begin{large}$-2$\end{large}}
\pscircle*(30.5,11){.15}
\pscircle*(30.06,12.06){.15}\pscircle*(33.06,12.06){.15}\pscircle*(27.5,11){.15}
\pscircle*(30.06,9.94){.15}\pscircle*(33.06,9.94){.15}
\rput(30.6,12.7){\sm{$1^+$}}\rput(33.6,12.7){\sm{$2^+$}}
\rput(30.6,9.4){\sm{$1^-$}}\rput(33.6,9.4){\sm{$2^-$}}
\rput(28.2,11){\sm{$1_{\R}$}}
\rput(30.5,8){$\ep(\cS)\!\cong_4\!\!2,~\cap\!\Ups_{1,2}$}
\psline[linewidth=.05](0,5)(0,0)
\psline[linewidth=.02](-.5,4.5)(3,4.5)\psline[linewidth=.02](-.5,.5)(3,.5)
\pscircle*(1,4.5){.15}\pscircle*(2,4.5){.15}
\pscircle*(0,3.5){.15}\pscircle*(0,1.5){.15}
\rput(1,5.2){\sm{$1^+$}}\rput(2.4,5.2){\sm{$2^+$}}
\rput(-.6,3.8){\sm{$3^+$}}\rput(-.6,1.8){\sm{$3^-$}}
\pscircle*(1,.5){.15}\pscircle*(2,.5){.15}
\rput(1,-.2){\sm{$1^-$}}\rput(2.4,-.2){\sm{$2^-$}}
\psline[linewidth=.05](6,5)(6,0)
\psline[linewidth=.02](5.5,4.5)(9,4.5)\psline[linewidth=.02](5.5,.5)(9,.5)
\pscircle*(7,4.5){.15}\pscircle*(8,4.5){.15}
\pscircle*(6,3.5){.15}\pscircle*(6,1.5){.15}
\rput(7,5.2){\sm{$1^+$}}\rput(8.4,5.2){\sm{$2^-$}}
\rput(5.4,3.8){\sm{$3^+$}}\rput(5.4,1.8){\sm{$3^-$}}
\pscircle*(7,.5){.15}\pscircle*(8,.5){.15}
\rput(7,-.2){\sm{$1^-$}}\rput(8.4,-.2){\sm{$2^+$}}
\rput(3.8,2.5){\begin{Large}$+$\end{Large}}
\rput(9.8,2.5){\begin{Large}$-$\end{Large}}
\psline[linewidth=.05](12,5)(12,0)
\psline[linewidth=.02](11.5,4.5)(15,4.5)\psline[linewidth=.02](11.5,.5)(15,.5)
\pscircle*(13,4.5){.15}\pscircle*(14,4.5){.15}
\pscircle*(12,3.5){.15}\pscircle*(12,1.5){.15}
\rput(13,5.2){\sm{$1^+$}}\rput(14.4,5.2){\sm{$3^+$}}
\rput(11.4,3.8){\sm{$2^+$}}\rput(11.4,1.8){\sm{$2^-$}}
\pscircle*(13,.5){.15}\pscircle*(14,.5){.15}
\rput(13,-.2){\sm{$1^-$}}\rput(14.4,-.2){\sm{$3^-$}}
\psline[linewidth=.05](18,5)(18,0)
\psline[linewidth=.02](17.5,4.5)(21,4.5)\psline[linewidth=.02](17.5,.5)(21,.5)
\pscircle*(19,4.5){.15}\pscircle*(20,4.5){.15}
\pscircle*(18,3.5){.15}\pscircle*(18,1.5){.15}
\rput(19,5.2){\sm{$1^+$}}\rput(20.4,5.2){\sm{$3^-$}}
\rput(17.4,3.8){\sm{$2^+$}}\rput(17.4,1.8){\sm{$2^-$}}
\pscircle*(19,.5){.15}\pscircle*(20,.5){.15}
\rput(19,-.2){\sm{$1^-$}}\rput(20.4,-.2){\sm{$3^+$}}
\rput(15.8,2.5){\begin{Large}$-$\end{Large}}
\rput(22.2,2.5){\begin{Large}$=$\end{Large}}
\pscircle[linewidth=.05](26.3,3){1.5}\pscircle[linewidth=.05](29.3,3){1.5}
\rput(23.8,2.5){\begin{large}$-2$\end{large}}
\pscircle*(27.8,3){.15}
\pscircle*(25.24,4.06){.15}\pscircle*(27.36,4.06){.15}\pscircle*(30.36,4.06){.15}
\pscircle*(25.24,1.94){.15}\pscircle*(27.36,1.94){.15}\pscircle*(30.36,1.94){.15}
\rput(24.9,4.7){\sm{$1^+$}}\rput(27.9,4.7){\sm{$2^+$}}\rput(30.9,4.7){\sm{$3^+$}}
\rput(24.9,1.4){\sm{$1^-$}}\rput(27.9,1.4){\sm{$2^-$}}\rput(30.9,1.4){\sm{$3^-$}}
\rput(27.8,0){$\ep(\cS)\!\cong_4\!\!2,~\cap\!\Ups_{0,3}$}
\pscircle[linewidth=.05](35,3){1.5}\pscircle[linewidth=.05](38,3){1.5}
\rput(32.5,2.5){\begin{large}$-2$\end{large}}
\pscircle*(36.5,3){.15}
\pscircle*(33.94,4.06){.15}\pscircle*(36.06,4.06){.15}\pscircle*(39.06,4.06){.15}
\pscircle*(33.94,1.94){.15}\pscircle*(36.06,1.94){.15}\pscircle*(39.06,1.94){.15}
\rput(33.6,4.7){\sm{$1^+$}}\rput(36.6,4.7){\sm{$3^+$}}\rput(39.6,4.7){\sm{$2^+$}}
\rput(33.96,1.4){\sm{$1^-$}}\rput(36.6,1.4){\sm{$3^-$}}\rput(39.6,1.4){\sm{$2^-$}}
\rput(36.5,0){$\ep(\cS)\!\cong_4\!\!2,~\cap\!\Ups_{0,3}$}
\end{pspicture}
\caption{The left-hand sides of the two identities represent
codimension~2 cycles with co-orientations in~$\R\ov\cM_{0,1,2}$ and~$\R\ov\cM_{0,0,3}$
that bound co-oriented hypersurfaces~$\Ups_{1,2}$ and~$\Ups_{0,3}$, respectively,
and their lifts to $\ov\fM_{0,k,l}^{\phi}(B;J)$.
The right-hand sides represent the corrections to the associated lifted relations 
from the crossings of the lifts of~$\Ups_{1,2}$ and~$\Ups_{0,3}$
with the codimension~1 strata~$\cS$ that obstruct the relative orientability 
of~\eref{evdfn_e}.
The involutions in all cases are the reflections around the horizontal center line.}
\label{TopolRel_fig2}
\end{figure}

If $k\!\ge\!1$, the $(k',l')\!=\!(1,2),(0,3)$ cases of the morphisms~\eref{Rffdfn_e}
are generally not relatively orientable.
The equality of the left- and right-hand sides of~\eref{RM12psirel_e} 
as co-oriented cycles in~$\R\ov\cM_{0,1,2}$ and 
the equality of the left- and right-hand sides of~\eref{RM03psirel_e}
as co-oriented cycles $\R\ov\cM_{0,0,3}$
are lifted in~\cite{RealWDVV,RealWDVVdim3} over these morphisms {\it along with} 
co-oriented bordisms
\hbox{$\Ups_{1,2}\!\subset\!\R\ov\cM_{0,1,2}$} and
\hbox{$\Ups_{0,3}\!\subset\!\R\ov\cM_{0,0,3}$} between the two sides. 
The intersections of the lifts of~$\Ups_{1,2}$ and~$\Ups_{0,3}$
with the codimension~1 strata~$\cS$ that obstruct the relative orientability 
of~\eref{evdfn_e} determine corrections to lifting the relations from 
$\R\ov\cM_{0,1,2}$ and $\R\ov\cM_{0,0,3}$ directly.
All such codimension~1 strata~$\cS$ consist of $J$-holomorphic maps 
from a pair of real components joined at a pair of real points 
which satisfy a certain numerical condition $\ep(\cS)\!\cong_4\!\!2$ 
as on the right-hand sides of the two equalities in Figure~\ref{TopolRel_fig2}.
The corrected lifted relations in turn translate to relations between counts 
of real three- and two-component genus~0 degree~$B$ $J$-holomorphic curves 
as in Figure~\ref{TopolRel_fig2} passing  through $k$~points in~$X^{\phi}$ 
and $l$~cycles in~$X$.
Splitting the nodal counts into counts of irreducible curves yields 
relations between
the real genus~0 Gromov-Witten invariants of~$(X,\om,\phi)$ and 
the standard genus~0 Gromov-Witten invariants of~$(X,\om)$.
The collection of these relations taken over all~$B$ and possible choices
of the constraints is equivalent to the PDEs~\eref{WDVVodeM12_e} and~\eref{WDVVodeM03_e}.\\

The open/real WDVV-type relations in dimension~2 obtained in~\cite{RealWDVV}
have the same structure as predicted in~\cite{Jake2}, but differ by crucial signs.
The former lead to the recursions for Welshinger's invariants predicted in~\cite{Jake2}
due to the comparison between the invariants of~\cite{Wel4} and~\cite{Jake}
predicted in~\cite{Jake2} also being off by signs.
The approach to the proof of real WDVV-type relations proposed in~\cite{Jake2}
is analogous in spirit to~\cite{RT} and 
involves defining a count of real curves for each element of $\R\ov\cM_{0,1,2}$
and $\R\ov\cM_{0,0,3}$, 
showing that these counts depend only on the homology classes of the insertions,
and are independent of the choice of the element in each of the two moduli spaces.
The desired invariance in fact does not hold in the case of~$\R\ov\cM_{0,1,2}$.
The proposed approach itself is more rigid than the lifting-of-cobordisms 
approach introduced in~\cite{RealWDVV} and does not connect as readily 
with the topological recursion relations.\\

Let $(X,\om,\phi)$ be a compact real symplectic sixfold with a Spin-structure on~$X^{\phi}$.
The real genus~0 Gromov-Witten invariants~\eref{RGWs_e} of~$(X,\om,\phi)$ appearing
in~\cite{RealWDVVdim3} differ from those in~\cite{Jake} by uniform multiples 
(dependent only on the number~$l$ of conjugate pairs of marked points) and
vanish on insertions from $H^2(X;\Q)^{\phi}_+$ and~$H^4(X;\Q)^{\phi}_-$. 
The $k\!=\!0$ invariants~\eref{RGWs_e} defined in~\cite{Penka2} agree with 
the $k\!=\!0$ invariants appearing in~\cite{RealWDVVdim3} when the insertions are taken
in $H^0(X;\Q)$, $H^2(X;\Q)$,  and $H^6(X;\Q)$, but vanish on $H^4(X;\Q)^{\phi}_+$.
In particular, the invariants of~\cite{Penka2} in the case of real symplectic sixfolds
with insertions in~$H^4(X;\Q)$ are not the same 
as the $k\!=\!0$ cases of the invariants arising from~\cite{Jake,RealWDVVdim3}.

\section{Complex and real WDVV PDEs}
\label{WDVV_subs}
 
Given a compact oriented even-dimensional manifold $X$,
we choose a basis $\mu_1^{\st},\ldots,\mu_N^{\st}$ for $H^{2*}(X;\Q)$
consisting of homogeneous elements.
Let $(g_{ij})_{i,j}$ be the $N\!\times\!N$-matrix given~by
$$g_{ij}=\blr{\mu_i^{\st}\mu_j^{\st},[X]}$$
and $(g^{ij})_{ij}$ be its inverse.
Thus, the Poincare dual of the diagonal $\De_X\!\subset\!X^2$ is given~by
\BE{diagsplit_e} \PD_{X^2}\big(\De_X)=\sum_{i,j\in[N]}\!\!g^{ij}\mu_i^{\st}\!\times\!\mu_j^{\st}
\in H^2(X^2;\Q)\big/\big(H^{\odd}(X;\Q)\!\times\!H^{\odd}(X;\Q)\!\big);\EE
see Theorem~11.11 in~\cite{MiSt}.
For a tuple $\bt\!\equiv\!(t_1,\ldots,t_N)$ of formal variables, define
$$\mu_{\bt}^{\st}=\mu_1^{\st}t_1\!+\!\ldots\!+\!\mu_N^{\st}t_N\,.$$

\vspace{.2in}

Suppose in addition $\om$ is a symplectic form on~$X$.
Let
$$\La_{\om}= \big\{(\Psi\!:H_2(X;\Z)\!\lra\!\Q)\!: \big|
\{B\!\in\!H_2(X;\Z)\!: \Psi(B)\!\neq\!0,\,\om(B)\!<\!E\}\big|\!<\!\i~\forall\,E\!\in\!\R\big\}.$$
We write an element $\Psi$ of $\La_{\om}$ as 
$$\Psi=\sum_{B\in H_2(X;\Z)}\hspace{-.2in}\Psi(B)q^B$$
and multiply two such elements as powers series in $q$ with the exponents in~$H_2(X;\Z)$.
For \hbox{$B\!\in\!H_2(X;\Z)$}, we denote~by
$$\lr{\ldots}_{\!B}^{\om}\!: \bigoplus_{l=0}^{\i}H^{2*}(X;\Q)^{\oplus l}\lra \Q$$
the genus~0 degree~$B$ Gromov-Witten invariants of~$(X,\om)$ and extend them linearly over 
the formal variables~$t_i$ above.
Define $\Phi_{\om}\!\in\!\La_{\om}[[t_1,\ldots,t_N]]$ by
\BE{CPhidfn_e}\Phi_{\om}(t_1,\ldots,t_N)=
\sum_{\begin{subarray}{c}B\in H_2(X;\Z)\\ l\in\Z^{\ge0}\end{subarray}}\!\!\!
\blr{\underset{l}{\underbrace{\mu_{\bt}^{\st},\ldots,\mu_{\bt}^{\st}}}}_{\!B}^{\om}\!\Bigg)
\frac{q^B}{l!}\,.\EE
By Gromov's Compactness Theorem,
the coefficients of the powers of $t_1,\ldots,t_N$ in $\Phi_{\om}$ lie in~$\La_{\om}$.
The relations on the genus~0 Gromov-Witten invariants of~$(X,\om)$ obtained 
by lifting the homology relation on~$\ov\cM_{0,4}$ represented by the top line
in Figure~\ref{TopolRel_fig} is equivalent to the set of \sf{WDVV PDEs}
\BE{CWDVV_e}  
\sum_{1\le i,j\le N}\!\!\!\!\!\!\!
\big(\prt_{t_a}\prt_{t_b}\prt_{t_i}\Phi_{\om}\big)g^{ij}
\big(\prt_{t_j}\prt_{t_c}\prt_{t_d}\Phi_{\om}\big)
=\sum_{1\le i,j\le N}\!\!\!\!\!\!\!
\big(\prt_{t_a}\prt_{t_c}\prt_{t_i}\Phi_{\om}\big)g^{ij}
\big(\prt_{t_j}\prt_{t_b}\prt_{t_d}\Phi_{\om}\big)\EE
with \hbox{$a,b,c,d\!=\!1,\ldots,N$}.\\

Let $\phi$ be an anti-symplectic involution on~$(X,\om)$.
Define 
$$\fd\!:H_2(X)\lra H_2(X)_-^{\phi}, \qquad \fd(B)=B\!-\!\phi_*(B)\,.$$
For a (relatively) Spin or Pin-structure~$\fs$ on (the tangent bundle) of~$X^{\phi}$,
$B\!\in\!H_2(X;\Z)$, and $k\!\in\!\Z^{\ge0}$, we denote~by
\BE{RGWs_e} \lr{\ldots}_{\!\fs;B;k}^{\om;\phi}\!: \bigoplus_{l=0}^{\i}H^{2*}(X;\Q)^{\oplus l}
\lra \Q\EE
the real genus~0 degree~$B$ Gromov-Witten invariants of~$(X,\om,\phi)$ 
with $k$~real marked points associated with~$\fs$, whenever they are defined.
We extend them linearly over the formal variables~$t_i$ above.
Define $\Phi_{\om}^{\phi}\!\in\!\La_{\om}[[t_1,\ldots,t_N]]$
and $\Om_{\om;\fs}^{\phi}\!\in\!\La_{\om}[[t_1,\ldots,t_N,u]]$ by
\begin{gather*}
\Phi_{\om}^{\phi}(t_1,\ldots,t_N)=
\sum_{\begin{subarray}{c}B\in H_2(X;\Z)\\ l\in\Z^{\ge0}\end{subarray}}\!\!\!
\Bigg(\sum_{\begin{subarray}{c}B'\in H_2(X;\Z)\\
\fd(B')=B\end{subarray}}\!\!\!\!\!\blr{
\underset{l}{\underbrace{\mu_{\bt}^{\st},\ldots,\mu_{\bt}^{\st}}}}_{\!B'}^{\om}\!\Bigg)
\frac{q^B}{l!}\,,\\
\Om_{\om;\fs}^{\phi}(t_1,\ldots,t_N,u)=
\sum_{\begin{subarray}{c}B\in H_2(X;\Z)\\ k,l\in\Z^{\ge0}\end{subarray}}\!\!\!
\blr{
\underset{l}{\underbrace{\mu_{\bt}^{\st},\ldots,\mu_{\bt}^{\st}}}}_{\!\fs;B,k}^{\!\om;\phi}
\frac{2^{1-l}q^Bu^k}{k!l!}\,.
\end{gather*}
By Gromov's Compactness Theorem and the assumption that $\phi^*\om\!=\!-\om$,
the inner sum in the definition of~$\Phi_{\om}^{\phi}$ has finitely nonzero terms.
For the same reason, the coefficients of the powers of $t_1,\ldots,t_N,u$ in
$\Phi_{\om}^{\phi}$ and $\Om_{\om;\fs}^{\phi}$ lie in~$\La_{\om}$.\\

Alcolado's thesis~\cite{Adam} predicts that the real genus~0 Gromov-Witten invariants
of~$(X,\om,\phi)$ satisfy the set of \sf{extended WDVV PDEs} 
\begin{gather}
\label{WDVVodeM12_e} 
\sum_{1\le i,j\le N}\!\!\!\!\!\!\!\big(\prt_{t_a}\prt_{t_b}\prt_{t_i}\Phi_{\om}^{\phi}\big)g^{ij}
\big(\prt_{t_j}\prt_u\Om_{\om;\fs}^{\phi}\big)+
\big(\prt_{t_a}\prt_{t_b}\Om_{\om;\fs}^{\phi}\big)\!
\big(\prt_u^2\Om_{\om;\fs}^{\phi}\big)
=\big(\prt_{t_a}\prt_u\Om_{\om;\fs}^{\phi}\big)\!
\big(\prt_{t_b}\prt_u\Om_{\om;\fs}^{\phi}\big),\\
\label{WDVVodeM03_e} 
\begin{split}
&\sum_{1\le i,j\le N}\!\!\!\!\!\!\!\big(\prt_{t_a}\prt_{t_b}\prt_{t_i}\Phi_{\om}^{\phi}\big)g^{ij}
\big(\prt_{t_j}\prt_{t_c}\Om_{\om;\fs}^{\phi}\big)+
\big(\prt_{t_a}\prt_{t_b}\Om_{\om;\fs}^{\phi}\big)\!
\big(\prt_{t_c}\prt_u\Om_{\om;\fs}^{\phi}\big)\\
&\hspace{1.4in}=
\sum_{1\le i,j\le N}\!\!\!\!\!\!\!\big(\prt_{t_a}\prt_{t_c}\prt_{t_i}\Phi_{\om}^{\phi}\big)g^{ij}
\big(\prt_{t_j}\prt_{t_b}\Om_{\om;\fs}^{\phi}\big)+
\big(\prt_{t_a}\prt_{t_c}\Om_{\om;\fs}^{\phi}\big)\!
\big(\prt_{t_b}\prt_u\Om_{\om;\fs}^{\phi}\big)
\end{split}\end{gather}
with \hbox{$a,b,c\!=\!1,\ldots,N$}, with appropriate definitions of the real invariants.
Alcolado's predictions have been confirmed
\begin{enumerate}[label=(C\arabic*),leftmargin=*]

\item\label{Rcond_it1} in~\cite{RealWDVV} if 
$(X,\om,\phi)$ is a real symplectic fourfold,
$\fs$ is a Pin$^-$-structure on~$X^{\phi}$,
and the real invariants are defined as in~\cite{RealWDVV};

\item\label{Rcond_it4} in~\cite{RealWDVVdim3} if 
$(X,\om,\phi)$ is a real symplectic sixfold with a finite-order automorphism 
which restricts to an orientation-reversing diffeomorphism of~$X^{\phi}$ and
acts trivially on the $(-\phi)_*$-invariant part of~$H_2(X,Y;\Z)$, 
$\fs$ is a Spin-structure on~$X^{\phi}$,
and the real invariants are defined as in~\cite{RealWDVVdim3}.\\

\end{enumerate}

A specialization of~\eref{WDVVodeM03_e} is obtained in~\cite{RealEnum}
for a real symplectic $2n$-fold $(X,\om,\phi)$ with $n$~odd
which admits a complex vector~$E$ with a conjugation~$\wt\phi$ lifting~$\phi$
so~that
\BE{PenkaCond_e}\begin{split}
&\hspace{1.5in}
w_1(E^{\wt\phi})^2=w_2(TX^{\phi}) \qquad\hbox{and}\\
&\blr{2\mu(E,E^{\wt\phi})\!+\!\mu(X,X^{\phi}),\be}\in4\Z\quad
\forall~\be\!\in\!H_2(X,X^{\phi};\Z)~\hbox{with}~\phi_*\be=-\be,
\end{split}\EE
where $E^{\wt\phi}\!\lra\!X^{\phi}$ is the fixed locus of the $\wt\phi$-action on~$E$
and $\mu(X,X^{\phi})$ and $\mu(E,E^{\wt\phi})$ are the Maslov indices of the pairs
$(TX,TX^{\phi})$ and~$(E,E^{\wt\phi})$, respectively.
A Spin-structure~$\fs$ on $2E^{\wt\phi}\!\oplus\!TX^{\phi}$
(along with some additional data if the equality 
on the second line of~\eref{PenkaCond_e} does not hold for all $\be\!\in\!H_2(X,X^{\phi};\Z)$)
determines real genus~0 Gromov-Witten invariants~\eref{RGWs_e} with $k\!=\!0$;
see Theorem~1.6 in~\cite{Penka2}.
We formally define the functionals~\eref{RGWs_e} with $k\!\ge\!1$ to be zero.
Suppose $\mu_1^{\st},\ldots,\mu_{N^-}^{\st}$ and $\mu_{N_-+1}^{\st},\ldots,\mu_N^{\st}$
are bases for~$H^{2*}(X;\Q)^{\phi}_-$ and~$H^{2*}(X;\Q)^{\phi}_+$, respectively.
Theorem~2.1 in~\cite{RealEnum} is then equivalent to
the PDEs~\eref{WDVVodeM03_e} with $a\!\in\![N]\!-\![N_-]$ and $b,c\!\in\![N_-]$.
By Theorem~2.2 in~\cite{RealEnum}, $\prt_{t_a}\prt_{t_b}\Om_{\om;\fs}^{\phi}$
and $\prt_{t_a}\prt_{t_c}\Om_{\om;\fs}^{\phi}$ in~\eref{WDVVodeM03_e} vanish 
with the definitions in~\cite{Penka2} because $\mu_{t_a}^{\st}\!\in\!H^{2*}(X;\Q)^{\phi}_+$.\\

Motivated by~\cite{Fuk10,Fuk11}, Solomon and Tukachinsky~\cite{JS12} 
use bounding chains in Fukaya's $A_{\i}$-algebras to define counts 
of $J$-holomorphic disks in a compact symplectic manifold~$(X,\om)$
with boundary in a compact Lagrangian~$Y$ with a relative Spin-structure~$\fs$.
According to Theorem~3 in~\cite{JS3}, these counts satisfy
the real WDVV PDEs~\eref{WDVVodeM12_e} and~\eref{WDVVodeM03_e}
if the generating function~$\Om_{\om;\fs}^{\phi}$ is defined without the factors of~$2^{1-l}$. 
As noted in Remark~2.7 of~\cite{RealEnum} and Theorem~2.10 of~\cite{JakeSaraGeom}, 
the use of a relative Spin structure~$\fs$ to orient the moduli spaces of disks
leads to the necessity to twist the generating function~$\Phi_{\om}^{\phi}$
by $(-1)^{\lr{w_2(\fs),B'}}$ for the purposes of~\eref{WDVVodeM12_e} and~\eref{WDVVodeM03_e}
(the authors of~\cite{JS12,JS3} are currently re-adjusting their signs to account for~this). 
This twist can be avoided if there exists a complex vector bundle $E\!\lra\!X$ and 
a totally real subbundle $E^{\R}\!\lra\!Y$ of~$E|_Y$ of maximal rank so~that
$w_1(E^{\R})^2\!=\!w_2(TY)$.
A Spin-structure~$\fs$ on $2E^{\R}\!\oplus\!TY$ then determines orientations
on the moduli spaces of disk maps to~$(X,Y)$ as in Section~7 in~\cite{Penka2}.
If the construction of~\cite{JS12} were carried out with these orientations,
instead of those induced by relative Spin structures as in Section~8.1 of~\cite{FOOO}, 
then the PDEs~\eref{WDVVodeM12_e} and~\eref{WDVVodeM03_e}  
would hold in~\cite{JS3} without the above sign twist.
The degree~$m$ holomorphic line \hbox{$E\!\equiv\!\cO_{\P^{2m-1}}(m)$} over~$\P^{2m-1}$
with the standard conjugation~$\wt\phi$ satisfies~\eref{PenkaCond_e}
and can thus be used to orient the moduli space of stable disk and real maps 
as in~\cite{Penka2}.
Theorem~10 in~\cite{JS3}, stating the PDEs~\eref{WDVVodeM12_e} and~\eref{WDVVodeM03_e}
for $(\P^{2m-1},\tau_{2m-1})$, requires the use of the resulting orientations,
rather than those induced by the associated relative Spin-structures.
By Theorem~1.4 in~\cite{RealGWsII}, these orientations on the main stratum 
$\fM_{0,k,l}^{\tau_{2m-1}}(d;J)$ of the moduli space of stable real degree~$d$
holomorphic maps to~$(\P^{2m-1},\tau_{2m-1})$  are different if and only~if
$m\!\not\in\!2\Z$ and $md\!\cong\!1,2$ mod~4.

\section{Relation between the PDEs~\eref{WDVVodeM12_e} and~\eref{WDVVodeM03_e}}
\label{PDE_sec}

We now show that the PDEs~\eref{WDVVodeM12_e} often imply the PDEs~\eref{WDVVodeM03_e}.

\begin{thm}\label{RWDVVimpl_thm}
If the functional~\eref{RGWs_e} is nonzero for some $k\!\ge\!2$,
then the relations~\eref{WDVVodeM12_e} imply the relations~\eref{WDVVodeM03_e}.
\end{thm}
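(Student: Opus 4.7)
The plan is to derive \eqref{WDVVodeM03_e} from \eqref{WDVVodeM12_e} by a purely algebraic manipulation, with the standard WDVV equation \eqref{CWDVV_e} for $\Phi^{\phi}_{\om}$ inserted in the middle and the hypothesis used only at the very end to cancel a common factor of $\prt_u^2\Om^{\phi}_{\om;\fs}$. For brevity, write $F\!=\!\Om^{\phi}_{\om;\fs}$ and $G\!=\!\Phi^{\phi}_{\om}$, with a subscript $a,b,c,i,j,u$ denoting the partial derivative in~$t_a,t_b,t_c,t_i,t_j,u$; set
$$L_{ab}=\sum_{i,j}G_{abi}g^{ij}F_{ju},\qquad M_{abc}=\sum_{i,j}G_{abi}g^{ij}F_{jc},$$
so that \eqref{WDVVodeM12_e} reads $L_{ab}+F_{ab}F_{uu}=F_{au}F_{bu}$, while \eqref{WDVVodeM03_e} is the identity $M_{abc}-M_{acb}=F_{ac}F_{bu}-F_{ab}F_{cu}$.

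First, I apply \eqref{WDVVodeM12_e} with index pair $(a,b)$ multiplied through by $F_{cu}$ and with $(a,c)$ multiplied through by $F_{bu}$, and subtract, to obtain
$$(F_{ac}F_{bu}-F_{ab}F_{cu})\,F_{uu}=L_{ab}F_{cu}-L_{ac}F_{bu}.$$
Next, I rewrite the right-hand side by substituting the identity $F_{ju}F_{cu}=L_{jc}+F_{jc}F_{uu}$ (and its $b$-analog) into the inner products appearing in $L_{ab}F_{cu}$ and $L_{ac}F_{bu}$. This produces
$$L_{ab}F_{cu}-L_{ac}F_{bu}=\sum_{i,j}G_{abi}g^{ij}L_{jc}-\sum_{i,j}G_{aci}g^{ij}L_{jb}+(M_{abc}-M_{acb})\,F_{uu}.$$
Expanding the $L$'s further, the difference of the two double sums becomes
$$\sum_{i,j,k,l}\bigl[G_{abi}g^{ij}G_{jck}-G_{aci}g^{ij}G_{jbk}\bigr]g^{kl}F_{lu},$$
and the bracketed factor vanishes by the standard WDVV equations \eqref{CWDVV_e} applied to~$\Phi^{\phi}_{\om}$. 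These WDVV equations hold for $\Phi^{\phi}_{\om}$ because $\Phi^{\phi}_{\om}$ is the image of $\Phi_{\om}$ under the Novikov ring homomorphism $q^{B'}\!\mapsto\!q^{\fd(B')}$, which preserves the trilinear relations in~\eqref{CWDVV_e}.

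Combining the two displays, I conclude that
$$\bigl[(F_{ac}F_{bu}-F_{ab}F_{cu})-(M_{abc}-M_{acb})\bigr]\,F_{uu}=0$$
in $\La_{\om}[[t_1,\ldots,t_N,u]]$. The hypothesis that $\lr{\ldots}^{\om;\phi}_{\fs;B,k}$ is nonzero for some $k\!\ge\!2$ means exactly that $F_{uu}\!\neq\!0$. Since $\La_{\om}$ is a Novikov-type integral domain over~$\Q$ and formal power series over an integral domain are again an integral domain, $F_{uu}$ is a non-zero-divisor; cancelling it yields~\eqref{WDVVodeM03_e}.

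The step I expect to require the most care is the final cancellation, which hinges on $F_{uu}$ being a non-zero-divisor in $\La_{\om}[[t_1,\ldots,t_N,u]]$. In fact this factors cleanly: the leading coefficient of $F_{uu}$ in~$u$ is the $k\!=\!2$ part of~$F$, which is nonzero in $\La_{\om}[[t_1,\ldots,t_N]]$ by hypothesis, and the Novikov ring $\La_{\om}$ is an integral domain by the standard leading-$\om$-energy argument enabled by its finiteness condition. The remainder of the argument is a routine bookkeeping exercise in applying \eqref{WDVVodeM12_e} and \eqref{CWDVV_e}.
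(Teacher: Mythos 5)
Your proposal is correct and follows essentially the same route as the paper: multiply instances of \eref{WDVVodeM12_e} by second derivatives of $\Om_{\om;\fs}^{\phi}$ and by $\prt_{t_a}\prt_{t_b}\prt_{t_l}\Phi_{\om}^{\phi}g^{lm}$, invoke the standard WDVV relations~\eref{CWDVV_e} for $\Phi_{\om}^{\phi}$ to eliminate the quadratic expression in its third derivatives, and cancel the resulting overall factor of $\prt_u^2\Om_{\om;\fs}^{\phi}$. Your justification of the final cancellation and of why $\Phi_{\om}^{\phi}$ satisfies~\eref{CWDVV_e} is in fact more explicit than the paper's, which leaves both points implicit.
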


\begin{proof}
The WDVV PDE~\eref{CWDVV_e} for the Gromov-Witten invariants of~$(X,\om)$ gives
\BE{CWDDVV_e}
\sum_{1\le l,m\le N}\!\!\!\!\!\!\!
\big(\prt_{t_a}\prt_{t_b}\prt_{t_l}\Phi_{\om}^{\phi}\big)g^{lm}\!
\big(\prt_{t_m}\prt_{t_c}\prt_{t_i}\Phi_{\om}^{\phi}\big)
=\sum_{1\le l,m\le N}\!\!\!\!\!\!\!
\big(\prt_{t_a}\prt_{t_c}\prt_{t_l}\Phi_{\om}^{\phi}\big)g^{lm}\!
\big(\prt_{t_m}\prt_{t_b}\prt_{t_i}\Phi_{\om}^{\phi}\big)\EE
for all $a,b,c,i\!=\!1,\ldots,N$.
Multiplying~\eref{WDVVodeM12_e} by $\prt_{t_c}\prt_u\Om_{\om;\fs}^{\phi}$ gives
\BE{WDVVodeM12a_e}\begin{split}
\big(\prt_u^2\Om_{\om;\fs}^{\phi}\big)\!\big(\prt_{t_a}\prt_{t_b}\Om_{\om;\fs}^{\phi}\big)\!
\big(\prt_{t_c}\prt_u\Om_{\om;\fs}^{\phi}\big)
&=\big(\prt_{t_a}\prt_u\Om_{\om;\fs}^{\phi}\big)\!
\big(\prt_{t_b}\prt_u\Om_{\om;\fs}^{\phi}\big)\!
\big(\prt_{t_c}\prt_u\Om_{\om;\fs}^{\phi}\big)\\
&\hspace{.4in}-\big(\prt_{t_c}\prt_u\Om_{\om;\fs}^{\phi}\big)\!\!\!\!\!
\sum_{1\le i,j\le N}\!\!\!\!\!\!\big(\prt_{t_a}\prt_{t_b}\prt_{t_i}\Phi_{\om}^{\phi}\big)g^{ij}
\!\big(\prt_{t_j}\prt_u\Om_{\om;\fs}^{\phi}\big)\,.
\end{split}\EE
Replacing $(a,b)$ in~\eref{WDVVodeM12_e} with $(c,m)$
and multiplying the resulting equation by 
$\prt_{t_a}\prt_{t_b}\prt_{t_l}\Phi_{\om}^{\phi}g^{lm}$ with $c,l,m\!=\!1,\ldots,N$,
we obtain
\BE{WDVVodeM12b_e}\begin{split}
\big(\prt_u^2\Om_{\om;\fs}^{\phi}\big)\!
\big(\prt_{t_a}\prt_{t_b}\prt_{t_l}\Phi_{\om}^{\phi}\big)g^{lm}
\big(\prt_{t_m}\prt_{t_c}\Om_{\om;\fs}^{\phi}\big)
&=\big(\prt_{t_c}\prt_u\Om_{\om;\fs}^{\phi}\big)\!
\big(\prt_{t_a}\prt_{t_b}\prt_{t_l}\Phi_{\om}^{\phi}\big)g^{lm}
\big(\prt_{t_m}\prt_u\Om_{\om;\fs}^{\phi}\big)\\
&\quad-
\big(\prt_{t_a}\prt_{t_b}\prt_{t_l}\Phi_{\om}^{\phi}\big)g^{lm}\!\!\!\!\!\!
\sum_{1\le i,j\le N}\!\!\!\!\!\!\big(\prt_{t_c}\prt_{t_m}\prt_{t_i}\Phi_{\om}^{\phi}\big)g^{ij}
\!\big(\prt_{t_j}\prt_u\Om_{\om;\fs}^{\phi}\big)\,.
\end{split}\EE
Summing up~\eref{WDVVodeM12b_e} over all $l,m\!=\!1,\ldots,N$ and adding~\eref{WDVVodeM12a_e}
to the result, we find~that
\BE{WDVVodeM12c_e}\begin{split}
\big(\prt_u^2\Om_{\om;\fs}^{\phi}\big)\!
\big(\tn{LHS of~\eref{WDVVodeM03_e}}\!\big)&=\big(\prt_{t_a}\prt_u\Om_{\om;\fs}^{\phi}\big)\!
\big(\prt_{t_b}\prt_u\Om_{\om;\fs}^{\phi}\big)\!
\big(\prt_{t_c}\prt_u\Om_{\om;\fs}^{\phi}\big)\\
&-\sum_{1\le i,j\le N}\!\!\Bigg(\!\sum_{1\le l,m\le N}\!\!\!\!\!\!\!
\big(\prt_{t_a}\prt_{t_b}\prt_{t_l}\Phi_{\om}^{\phi}\big)g^{lm}\!
\big(\prt_{t_m}\prt_{t_c}\prt_{t_i}\Phi_{\om}^{\phi}\big)\!\!\!\Bigg)\!g^{ij}
\!\big(\prt_{t_j}\prt_u\Om_{\om;\fs}^{\phi}\big)\,.
\end{split}\EE
The right-hand side of~\eref{WDVVodeM03_e} times $\prt_u^2\Om_{\om;\fs}^{\phi}$ is 
given by the right-hand side of~\eref{WDVVodeM12c_e} with the roles of~$b$ and~$c$ 
interchanged.
Thus,
\begin{equation*}\begin{split}
&\big(\prt_u^2\Om_{\om;\fs}^{\phi}\big)\!\Big(\!\!
\big(\tn{LHS of~\eref{WDVVodeM03_e}}\!\big)-
\big(\tn{RHS of~\eref{WDVVodeM03_e}}\!\big)\!\!\Big)\\
&=\sum_{1\le i,j\le N}\!\!\Bigg(\!\sum_{1\le l,m\le N}\!\!\!\!\!\!\!
\Big(\!\!\big(\prt_{t_a}\prt_{t_c}\prt_{t_l}\Phi_{\om}^{\phi}\big)g^{lm}\!
\big(\prt_{t_m}\prt_{t_b}\prt_{t_i}\Phi_{\om}^{\phi}\big)
-\big(\prt_{t_a}\prt_{t_b}\prt_{t_l}\Phi_{\om}^{\phi}\big)g^{lm}\!
\big(\prt_{t_m}\prt_{t_c}\prt_{t_i}\Phi_{\om}^{\phi}\big)\!\!\Big)
\!\!\!\Bigg)\!g^{ij}
\!\big(\prt_{t_j}\prt_u\Om_{\om;\fs}^{\phi}\big).
\end{split}\end{equation*}
Combing this statement with~\eref{CWDDVV_e}, we conclude that
\BE{WDVVodeM03b_e}
\big(\prt_u^2\Om_{\om;\fs}^{\phi}\big)\!\Big(\!\!
\big(\tn{LHS of~\eref{WDVVodeM03_e}}\!\big)-
\big(\tn{RHS of~\eref{WDVVodeM03_e}}\!\big)\!\!\Big)=0\in
\La_{\om}[[t_1,\ldots,t_N,u]].\EE
This establishes the claim.
\end{proof}

\section{Proof of Theorem~\ref{Cpsi_thm}}
\label{Crelpf_sec}

By symmetry, it is sufficient to establish this proposition for $i\!=\!2$ and $j\!=\!3$.
Since $\ov\cM_{0,3}$ is a single point, Theorem~\ref{Cpsi_thm} holds for $l\!=\!3$.\\

Let $l\!\ge\!3$ and 
$$\ff\!: \ov\cM_{0,l+1}\lra\ov\cM_{0,l}$$
be the forgetful morphism dropping the last marked point.
For each partition $I\!\cup\!J$ of $[l]$ with $|I|,|J|\!\ge\!2$, 
the restriction of~$\ff$ to the complement of $D_{I(l+1),J}\!\cap\!D_{I,J(l+1)}$
in $\ov\cM_{0,l+1}$ is transverse to~$D_{I,J}$.
Since
\begin{gather*}
\ff^{-1}\big(D_{I,J}\big)= D_{I(l+1),J}\!\cup\!D_{I,J(l+1)}, \\ 
\big\{\nd\ff|_{D_{I(l+1),J}-D_{I,J(l+1)}}\big\}^*\fo_{D_{I,J}}^c
=\fo_{D_{I(l+1),J}}^c, \quad
\big\{\nd\ff|_{D_{I,J(l+1)}-D_{I(l+1),J}}\big\}^*\fo_{D_{I,J}}^c
=\fo_{D_{I,J(l+1)}}^c,
\end{gather*}
and $D_{I(l+1),J}\!\cap\!D_{I,J(l+1)}$ is of real codimension~2 in 
$D_{I(l+1),J}$ and~$D_{I,J(l+1)}$,
\BE{Crelpf_e3}
\ff^*\big(\PD_{\ov\cM_{0,l}}\!\big(D_{I,J}\big)\big)
=\PD_{\ov\cM_{0,l+1}}\!\big(D_{I(l+1),J}\big)\!+\!
\PD_{\ov\cM_{0,l+1}}\!\big(D_{I,J(l+1)}\big)
\in H^2\big(\ov\cM_{0,l+1};\Z\big)\,.\EE
On the other hand, the section 
$$s\in\Ga\big(\ov\cM_{0,l+1};L_1^*\!\otimes\!\ff^*L_1\big),
\qquad s(w)=\nd\ff(w),$$
vanishes only along the divisor $D_{1(l+1),[l]-\{1\}}$.
Since $\nd s$ induces an orientation-preserving isomorphism from 
$(\cN D_{1(l+1),[l]-\{1\}},\fo_{D_{1(l+1),[l]-\{1\}}}^c)$ to 
$L_1^*\!\otimes\!\ff^*L_1|_{D_{1(l+1),[l]-\{1\}}}$,
we conclude~that 
\BE{Crelpf_e5} \psi_1=\ff^*\psi_1\!+\!\PD_{\ov\cM_{0,l+1}}\!\big(D_{1(l+1),[l]-\{1\}}\big)
\in H^2\big(\ov\cM_{0,l+1};\Z\big).\EE
Combining \eref{Crelpf_e3} with \eref{Crelpf_e5}, we obtain
the identity of Theorem~\ref{Cpsi_thm} by induction.

\section{Proof of Theorem~\ref{Rpsi_thm}}
\label{Rrelpf_sec}

By symmetry, it is sufficient to assume that $i\!=\!2$ in the second statement.
Since $\R\ov\cM_{0,1,1}$ is a single point and $\R\ov\cM_{0,0,2}$ is a circle,
the first statement of this proposition holds for $(k,l)\!=\!(1,1)$
and the second for $(k,l)\!=\!(0,2)$.\\

If $k\!+\!2l\!\ge\!3$, let   
$$\ff\!: \R\ov\cM_{0,k,l+1}\lra\R\ov\cM_{0,k,l}
\qquad\hbox{and}\qquad
\ff_{\R}\!: \R\ov\cM_{0,k+1,l}\lra\R\ov\cM_{0,k,l}$$
be the forgetful morphisms dropping the last conjugate pair of marked points and
the last real marked point, respectively.
By the same reasoning as for~\eref{Crelpf_e3},
\BE{Rrelpf_e3}\begin{split}
\ff^*\big(\PD_{\R\ov\cM_{0,k,l}}\!\big(\R D_{I,J},\fo_{\R D_{I,J}}^c\big)\big)
=\,&\PD_{\R\ov\cM_{0,k,l+1}}\!\big(\R D_{I(l+1),J},\fo_{\R D_{I(l+1),J}}^c\big)\\
&+\PD_{\R\ov\cM_{0,k,l+1}}\!\big(\R D_{I,J(l+1)},\fo_{\R D_{I,J(l+1)}}^c\big)
\in H^2\big(\R\ov\cM_{0,k,l+1};\Z\big)
\end{split}\EE
for each partition $I\!\cup\!J$ of $[l]$ with $|I|\!\ge\!2$ and either $J\!\neq\!\eset$
or $k\!>\!0$.
We also~have 
\BE{Rrelpf_e3b}
\ff_{\R}^*\big(\PD_{\R\ov\cM_{0,k,l}}\!\big(\R D_{I,J},\fo_{\R D_{I,J}}^c\big)\big)
=\PD_{\R\ov\cM_{0,k+1,l}}\!\big(\R D_{I,J},\fo_{\R D_{I,J}}^c\big)
\in H^2\big(\R\ov\cM_{0,k+1,l};\Z\big)\EE
for each partition $I\!\cup\!J$ of $[l]$ with $|I|\!\ge\!2$ and either $J\!\neq\!\eset$
or $k\!>\!0$.
By the same reasoning as for~\eref{Crelpf_e5},
\BE{Rrelpf_e5}\begin{split}
\psi_1=\ff^*\psi_1\!+\!\PD_{\R\ov\cM_{0,k,l+1}}\!\big(\R D_{1(l+1),[l]-\{1\}},
\fo_{\R D_{1(l+1),[l]-\{1\}}}^c\big)&\in H^2\big(\R\ov\cM_{0,k,l+1};\Z\big),\\
\psi_1=\ff_{\R}^*\psi_1\in H^2\big(\R\ov\cM_{0,k+1,l};\Z\big)&.
\end{split}\EE
Combining \eref{Rrelpf_e3} and \eref{Rrelpf_e3b} with \eref{Rrelpf_e5}, we obtain
the two identities of Theorem~\ref{Rpsi_thm} by induction.\\

\vspace{.3in}

{\it Department of Mathematics, Stony Brook University, Stony Brook, NY 11794\\
azinger@math.stonybrook.edu}

\end{document}